\title{Rotor-router aggregation on the layered square lattice}
\author{Wouter Kager \and Lionel Levine}
\address{Wouter Kager, Department of Mathematics, VU University Amsterdam, De 
Boelelaan 1081, 1081\,HV Amsterdam, The Netherlands, {\normalfont
\url{http://www.few.vu.nl/~wkager}}}
\address{Lionel Levine, Department of Mathematics, Massachusetts Institute of 
Technology, Cambridge, MA 02139, {\normalfont 
\url{http://math.mit.edu/~levine}}}
\keywords{asymptotic shape, growth model, low discrepancy, rotor-router aggregation, strong abelian property}
\thanks{The second author was partly supported by a National Science 
Foundation Postdoctoral Fellowship.}
\subjclass[2010]{82C24}
\date{March 19, 2010}
\newcommand{\Z}{\mathbbm{Z}}						
\newcommand{\N}{\mathbbm{N}}						
\newcommand{\arxiv}[1]{\href{http://arxiv.org/abs/#1}{arXiv:#1}}
\newcommand{\old}[1]{}
\newtheorem{theorem}{Theorem}
\newtheorem{lemma}[theorem]{Lemma}
\theoremstyle{remark}
\def\source{\mathtt{s}}
\def\target{\mathtt{t}}
\def\basis{\mathbf{e}}
\begin{document}

\begin{abstract}
	In rotor-router aggregation on the square lattice $\Z^2$, particles 
	starting at the origin perform deterministic analogues of random walks 
	until reaching an unoccupied site.  The limiting shape of the cluster of 
	occupied sites is a disk.  We consider a small change to the routing mechanism 
	for sites on the $x$- and $y$-axes, resulting in a limiting shape 
	which is a diamond instead of a disk.  We show that for a certain choice 
	of initial rotors, the occupied cluster grows as a perfect diamond.
\end{abstract}

\maketitle

\section{Introduction}
\label{sec:intro}

Recently there has been considerable interest in low-discrepancy deterministic 
analogues of random processes.  An example is rotor-router walk \cite{PDDK96}, 
a deterministic analogue of random walk.  Based at every vertex of the square 
grid~$\Z^2$ is a \emph{rotor} pointing to one of the four neighboring 
vertices.  A chip starts at the origin and moves in discrete time steps 
according to the following rule.  At each time step, the rotor based at the 
location of the chip turns clockwise 90~degrees, and the chip then moves to 
the neighbor to which that rotor points.

Holroyd and Propp~\cite{HP09} show that rotor-router walk captures the mean 
behavior of random walk in a variety of respects: stationary measure, hitting 
probabilities and hitting times. 
Cooper and Spencer~\cite{CS06} study rotor-router walks in which $n$~chips 
starting at arbitrary even vertices each take a fixed number~$t$ of steps, 
showing that the final locations of the chips approximate the distribution of 
a random walk run for $t$~steps to within constant error independent of $n$ 
and~$t$. Rotor-router walk and other low-discrepancy deterministic processes 
have algorithmic applications in areas such as broadcasting information in 
networks~\cite{DFS08} and iterative load-balancing~\cite{FGS10}.  The common 
theme running through these results is that the deterministic process captures 
some aspect of the mean behavior of the random process, but with significantly 
smaller fluctuations than the random process.

\emph{Rotor-router aggregation} is a growth model defined by repeatedly 
releasing chips from the origin $o \in \Z^2$, each of which performs a 
rotor-router walk until reaching an unoccupied site.  Formally, we set $A_0 = 
\{o\}$ and recursively define
\begin{equation}
	\label{eq:thegrowthrule}
	A_{m+1} = A_m \cup \{z_m\}
\end{equation}
for $m\geq 0$, where $z_m$ is the endpoint of a rotor-router walk started at 
the origin in~$\Z^2$ and stopped on exiting~$A_m$.  We do not reset the rotors when a new chip is released.

It was shown in~\cite{LP08,LP09} that for any initial rotor configuration,
the asymptotic shape of the set~$A_m$ is a Euclidean disk.  It is in some 
sense remarkable that a growth model defined on the square grid, and without 
any reference to the Euclidean norm $|x|=(x_1^2+x_2^2)^{1/2}$, nevertheless 
has a circular limiting shape.  Here we investigate the dependence of this 
shape on changes to the rotor-router mechanism.

The \emph{layered square lattice}~$\hat \Z^2$ is the directed multigraph 
obtained from the usual square grid~$\Z^2$ by reflecting all directed edges on 
the $x$- and $y$-axes that point to a vertex closer to the origin. For 
example, for each positive integer~$n$, the edge from $(n,0)$ to $(n-1,0)$ is 
reflected so that it points from $(n,0)$ to $(n+1,0)$.  Only edges on the $x$- 
and $y$-axes are affected.  Rotor-router walk on~$\hat \Z^2$ is equivalent to 
rotor-router walk on~$\Z^2$ with one modification: if the chip is on one of 
the axes, and the rotor points along the axis towards the origin after it is 
turned, then the chip ignores the rotor and moves in the opposite direction 
instead. 

For $n \geq 0$, let
\[ D_n = \left\{ (x,y) \in \Z^2 \,:\, |x|+|y|\leq n \right\}. \]
We call $D_n$ the \emph{diamond of radius~$n$}.  Our main result is the 
following.

\begin{theorem}
	\label{thm:diamond}
	There is a rotor configuration~$\rho_0$, such that rotor-router 
	aggregation $(A_m)_{m \geq 0}$ on~$\hat \Z^2$ with rotors initially 
	configured as~$\rho_0$ satisfies
	\[ A_{2n(n+1)} = D_n \qquad \text{ for all $n \geq 0$}. \]
\end{theorem}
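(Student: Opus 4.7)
The plan is to proceed by induction on $n$, strengthening the hypothesis to assert not only $A_{2n(n+1)} = D_n$ but also that the rotor configuration on $D_n$ at that time is a specific state $\rho_n$. Since $|D_n| = 2n(n+1)+1$, the cardinalities of the clusters match automatically, and the difference $|D_{n+1}| - |D_n| = 4(n+1)$ equals the number of sites on the outer boundary $\partial D_{n+1} = \{z\in\Z^2 : \norm{z}_1 = n+1\}$; so to get $A_{2(n+1)(n+2)} = D_{n+1}$ it suffices that each of the next $4(n+1)$ chips released after $A = D_n$ lands on a distinct site of $\partial D_{n+1}$.

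I would first try to exhibit $\rho_n$ explicitly, aiming for the full dihedral symmetry of the diamond, taking into account that clockwise turning forces the reflection symmetries to be combined with reversal of the rotor cycle at each site. The initial configuration $\rho_0$ on all of $\hat \Z^2$ is then built shell by shell: on $\partial D_n \setminus D_{n-1}$ it is set equal to the portion of $\rho_n$ that the new chips encounter for the first time, so that the inductive hypothesis at stage $n$ produces, upon firing $4(n+1)$ further chips, the state needed at stage $n+1$. Dihedral symmetry cuts the analysis down to a single octant, say $\{(x,y) : 0 \leq y \leq x\}$, and the axis-reflection rule of $\hat\Z^2$ plays a decisive role on its boundary.

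For the inductive step, I would invoke the strong abelian property of rotor routing to fire the $4(n+1)$ new chips in whichever order makes their trajectories most transparent. The reflection mechanism on the axes is the geometric reason a diamond can appear: a chip that reaches an axis site cannot move back toward the origin, so chips launched along an axis travel straight out and settle at $(\pm(n+1),0)$ or $(0,\pm(n+1))$, while off-axis chips are steered by the interior rotor pattern to the remaining boundary sites. The main obstacle is guessing $\rho_n$ well enough that this trajectory analysis closes under the induction, meaning that after the $4(n+1)$ chips the rotor state on $D_{n+1}$ is exactly $\rho_{n+1}$ and that the untouched rotors on $\partial D_{n+2}\setminus D_{n+1}$ inherited from $\rho_0$ mesh with it correctly. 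Bookkeeping via the odometer function, which satisfies a discrete Laplace-type equation modified by the axis reflection, is likely to be the central tool both for constructing $\rho_n$ and for verifying the recursion.
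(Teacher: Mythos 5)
Your outline has the right skeleton --- induction on shells, an explicit rotor state after each shell, and an appeal to abelianness --- but the step that carries all the difficulty is left unexecuted, and the one concrete claim you make about it is false. The heart of the matter is showing that the $4(n+1)$ chips released after $A=D_n$ settle on distinct sites of $L_{n+1}$. You propose to do this by tracking trajectories, asserting for instance that a chip reaching the $x$-axis ``travels straight out'' to $(\pm(n+1),0)$. It does not: at $(m,0)$ with $m>0$ the rotor cycles through east, north, east, south, so a chip leaves such a site eastward only every other visit and otherwise steps to $(m,\pm 1)$; the axes are not absorbing channels, and off-axis trajectories wander over all of $D_n$ before exiting. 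You acknowledge that ``guessing $\rho_n$ well enough that this trajectory analysis closes'' is the main obstacle --- that obstacle essentially \emph{is} the theorem, and the proposal does not overcome it. (Also, defining $\rho_0$ on $L_n$ shell by shell in terms of what stage $n$ of the induction requires makes the unknowns $\rho_0$ restricted to $L_n$ and $\rho_{n+1}$ mutually dependent, compounding the problem; in fact the simple configuration $\rho_0(z)=e_z^0$ works globally.)

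The paper's proof avoids trajectories entirely. It writes down a closed-form candidate odometer, $u_n = u'_n - 1_C$ with $u'_n(z)=\ell_z(\ell_z+1)$ for $z \neq o$ and a small indicator correction $1_C$, and verifies purely locally (a mass-balance count at each vertex, exploiting the parity structure of the rotor orderings) that firing according to $u_n$ from $2n(n+1)+1$ chips at the origin leaves exactly one chip on every site of $D_n$ and an acyclic rotor configuration. A \emph{Strong} Abelian Property --- two firing vectors on a finite graph with sinks that produce the same chip configuration off the sinks and acyclic final rotors must be equal, even if one of them is not legal --- then forces $u_n$ to coincide with the true aggregation odometer, whence $A_{2n(n+1)}=D_n$. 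Your appeal to abelianness is only the ordinary reordering property for legal firings; the stronger statement is what lets one certify a guessed odometer without ever following a chip. To complete your argument along the paper's lines you would need (i) the explicit odometer formula, (ii) the local verification that it yields the configuration $1_{D_n}$, and (iii) the Strong Abelian Property itself, none of which appear in the proposal.
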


A formal definition of rotor-router walk on~$\hat \Z^2$ and an explicit 
description of the rotor configuration~$\rho_0$ are given below.

Let us remark on two features of Theorem~\ref{thm:diamond}.  First, note that 
the rotor mechanism on~$\hat \Z^2$ is identical to that on~$\Z^2$ except for 
sites on the $x$- and $y$-axes.  Nevertheless, changing the mechanism on the axes
completely changes the limiting shape, transforming it from a disk into a 
diamond.  Second, not only is the aggregate close to a diamond, it is exactly 
equal to a diamond whenever it has the appropriate size 
(Figure~\ref{fig:simulation}). 

\begin{figure}
	\begin{center}
		\includegraphics[width=2.5in]{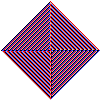}
	\end{center}
	\caption{The rotor-router aggregate of 5101 chips in the layered square lattice~$\hat \Z^2$ is a perfect diamond of radius~50. The colors encode the directions of the final rotors at the occupied vertices: red = north, blue = east, gray = south 
	and black = west.}
	\label{fig:simulation}
\end{figure}

In~\cite{KL10}, we studied the analogous stochastic growth model, known as 
\emph{internal DLA}, defined by the growth rule~\eqref{eq:thegrowthrule} using 
random walk on~$\hat \Z^2$.
This random walk has a \emph{uniform layering property}: at any fixed time, 
its distribution is a mixture of uniform distributions on the diamond layers
\[
	L_m = \left\{ (x,y) \in \Z^2 \,:\, |x|+|y| = m \right\},
	\qquad m \geq 1.
\]
It is for this reason that we call $\hat \Z^2$ the layered square lattice.

As a consequence of the uniform layering property, internal DLA on~$\hat \Z^2$ 
also grows as a diamond, but with random fluctuations at the boundary.  
Theorem~\ref{thm:diamond} thus represents an extreme of discrepancy reduction: 
passing to the deterministic analogue removes \emph{all} of the fluctuations 
from the random process, leaving only the mean behavior. For a similar ``no 
discrepancy'' result when the underlying graph is a regular tree instead 
of~$\hat \Z^2$, see~\cite{LL09}.

To formally define rotor-router walk on~$\hat \Z^2$, write $\basis_1 = (1,0)$, 
$\basis_2 = (0,1)$ and let $R = \left( \begin{smallmatrix} 0 & -1 \\ 1 & 0 
\end{smallmatrix} \right)$ be clockwise rotation by 90~degrees.  For each site $z\in \Z^2\setminus \{o\}$ there is a unique choice of a number $j \in 
\{0,1,2,3\}$ and a point~$w$ in the quadrant
\[	Q = \left\{ (x,y) \in \Z^2 \,:\, x\geq0,\, y>0 \right\}	\]
such that $z = R^jw$. Given $j$ and $w = (x,y)$, we associate to $z=R^jw$ a 
4-tuple $(e_z^0, e_z^1, e_z^2, e_z^3)$ of directed outgoing edges, where
\begin{equation}
	e_z^i = \begin{cases}
		(z,z+R^j\basis_2)		&	\text{if $i=2$ and $x=0$}; \\
		(z,z+R^{i+j}\basis_2)	&	\text{otherwise}.
	\end{cases}
	\label{eq:order}
\end{equation}
Thus, for $z\in Q$ (hence $j=0$ and $w=z$) the edges $e_z^0, e_z^1, e_z^2, 
e_z^3$ point respectively north, east, north, west when $z$ is on the 
$y$-axis; and north, east, south, west when $z$ is off the $y$-axis.  For~$z$ 
in another quadrant, the directions of $e_z^0, e_z^1, e_z^2, e_z^3$ are 
obtained using rotational symmetry.  To the origin we associate the 4-tuple 
$(e_o^0, e_o^1, e_o^2, e_o^3)$ where $e_o^i = (o,R^i\basis_2)$ for $i=0,1,2,3$.

For every $z\in \Z^2$, let $E_z$ be the multiset $\{ e_z^0, e_z^1, e_z^2, 
e_z^3 \}$. If $e = e_z^i \in E_z$, we denote by $e^+$ the next element 
$e_z^{i+1\bmod 4}$ of~$E_z$ under the cyclic shift. The \emph{layered square 
lattice}~$\hat \Z^2$ is the directed multigraph with vertex set $V = \Z^2$ and 
edge multiset $E = \bigcup_{z \in \Z^2} E_z$, where edges that appear twice 
in~$E_z$ have multiplicity two (Figure~\ref{fig:setup}, left). Thus every 
vertex has out-degree four, and every vertex except for the origin and its 
neighbors has in-degree four.

\begin{figure}
	\begin{center}
		\includegraphics{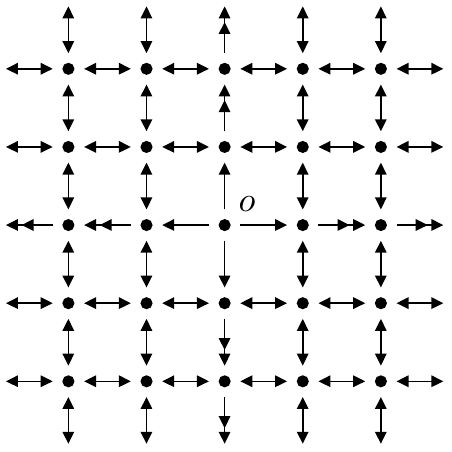} \qquad
		\includegraphics{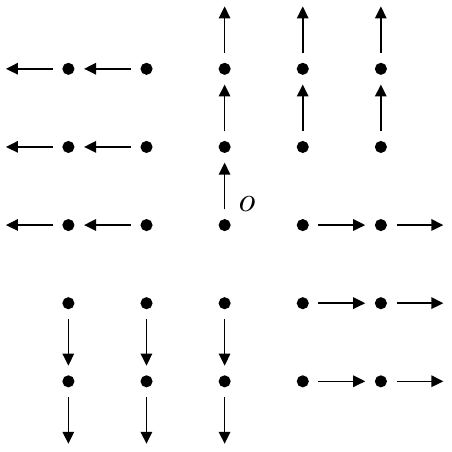}
	\end{center}
	\caption{Left: The layered square lattice~$\hat \Z^2$. Each directed edge 
	is represented by an arrow; multiple edges on the $x$- and $y$-axes are 
	represented by double arrows. The origin~$o$ is in the center. Right: The 
	initial rotor configuration~$\rho_0$.}
	\label{fig:setup}
\end{figure}

The initial rotor configuration~$\rho_0$ appearing in 
Theorem~\ref{thm:diamond} is given by
\begin{equation}
	\rho_0(z) = e_z^0,	\qquad z\in\Z^2.
	\label{eq:rho_0}
\end{equation}
It has every rotor in the quadrant~$Q$ pointing north, and rotor directions in 
the other quadrants given by rotational symmetry (Figure~\ref{fig:setup}, 
right).

We may now describe rotor-router walk on~$\hat \Z^2$ as follows.  Given a 
rotor configuration~$\rho$ with a chip at vertex~$z$, a single step of the 
walk consists of changing the rotor~$\rho(z)$ to~$\rho(z)^+$, and moving the 
chip to the vertex pointed to by the new rotor~$\rho(z)^+$.  This yields a new 
rotor configuration and a new chip location.  Note that if the walk visits~$z$ infinitely many times, then it visits all out-neighbors of~$z$ infinitely many times, and hence visits every vertex of~$\hat \Z^2$ (except for~$o$) infinitely many times.   It follows that rotor-router walk exits any finite subset of $\hat \Z^2$ in a finite number of steps; in particular, rotor-router aggregation terminates in finitely many steps. 

\old{
In the next section we generalize this formulation of the model to any finite directed multigraph, and prove a 
``strong abelian property'' of rotor-router walk. In 
Section~\ref{sec:shapetheorem} we apply the strong abelian property on the 
layered square lattice to prove Theorem~\ref{thm:diamond}.
}

\section{Strong Abelian Property}
\label{sec:abelian}

In this section we prove a ``strong abelian property'' of the rotor-router model, 
Theorem~\ref{thm:SAP}, which holds on any finite directed multigraph.  To prove Theorem~\ref{thm:diamond}, we will apply the results of this section to the induced subgraph~$D_n$ of~$\hat \Z^2$.  

Let $G = (V,E)$ be a finite directed multigraph (it may have loops and multiple 
edges).  Each edge $e\in E$ is directed from its source vertex~$\source(e)$ to 
its target vertex~$\target(e)$.  For a vertex $v \in V$, write
\[ E_v = \{ e \in E \,:\, \source(e)=v \} \]
for the multiset of edges emanating from~$v$.  The \emph{outdegree}~$d_v$ 
of~$v$ is the cardinality of~$E_v$.

Fix a nonempty subset $S \subset V$ of vertices called \emph{sinks}.  Let $V' 
= V\setminus S$, and for each vertex $v\in V'$, fix a numbering $e_v^0, 
\dotsc, e_v^{d_v-1}$ of the edges in~$E_v$. If $e = e_v^i \in E_v$, we denote 
by~$e^+$ the next element $e_v^{i+1\bmod d_v}$ of~$E_v$ under the cyclic 
shift.

A \emph{rotor configuration} on~$G$ is a function
\[ \rho: V' \to E \]
such that $\rho(v) \in E_v$ for all $v\in V'$. A \emph{chip configuration} 
on~$G$ is a function
\[ \sigma: V \to \Z. \]
Note that we do not require $\sigma\geq0$. If $\sigma(v) = m > 0$, we say 
there are $m$~chips at vertex~$v$; if $\sigma(v) = -m < 0$, we say there is a 
hole of depth~$m$ at vertex~$v$.

Fix a vertex $v \in V'$.  Given a rotor configuration~$\rho$ and a chip 
configuration~$\sigma$, the operation~$F_v$ of \emph{firing}~$v$ yields a new 
pair
\[ F_v(\rho,\sigma) = (\rho',\sigma') \]
where
\[
	\rho'(w) = \begin{cases}
		\rho(w)^+	&	\text{if $w=v$}; \\
		\rho(w)		&	\text{if $w\neq v$};
	\end{cases}
\]
and
\[
	\sigma'(w) = \begin{cases}
		\sigma(w)-1	&	\text{if $w=v$}; \\
		\sigma(w)+1	&	\text{if $w=\target(\rho(v)^+)$}; \\
		\sigma(w)	&	\text{otherwise}.
	\end{cases}
\]
In words, $F_v$ first rotates the rotor at~$v$, then sends a single chip 
from~$v$ along the new rotor~$\rho(v)^+$. We do not require $\sigma(v) > 0$ in 
order to fire~$v$. Thus if $\sigma(v) = 0$, i.e., no chips are present at~$v$, 
then firing~$v$ will create a hole of depth~1 at~$v$; if $\sigma(v) < 0$, so 
that there is already a hole at~$v$, then firing~$v$ will increase the depth 
of the hole by~1.

Observe that the firing operators commute: $F_vF_w = F_wF_v$ for all $v,w \in 
V'$. Denote by~$\N$ the nonnegative integers.  Given a function
\[ u: V' \to \N \]
we write
\[ F^u = \prod_{v\in V'} F_v^{u(v)} \]
where the product denotes composition.  By commutativity, the order of the 
composition is immaterial.

A rotor configuration~$\rho$ is \emph{acyclic} if the spanning subgraph 
$(V,\rho(V'))$ has no directed cycles or, equivalently, if for every nonempty 
subset $A \subset V'$ there is a vertex $v \in A$ such that $\target(\rho(v)) 
\notin A$.

In the following theorem and lemmas, for functions~$f,g$ defined on a set of 
vertices $A \subset V$, we write ``$f = g$ on~$A$'' to mean that $f(v) = g(v)$ for 
all $v \in A$, and ``$f \leq g$ on~$A$'' to mean that $f(v) \leq g(v)$ for all $v 
\in A$.

\begin{theorem}[Strong Abelian Property]
	\label{thm:SAP}
	Let~$\rho$ be a rotor configuration and~$\sigma$ a chip configuration 
	on~$G$.	Given two functions $u_1,u_2: V'\to \N$, write
	\[ F^{u_i}(\rho,\sigma) = (\rho_i,\sigma_i), \qquad i=1,2. \]
	If $\sigma_1 = \sigma_2$ on~$V'$, and both $\rho_1$ and~$\rho_2$ are 
	acyclic, then $u_1 = u_2$.
\end{theorem}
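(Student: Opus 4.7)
The plan is a chip-conservation argument in which acyclicity of~$\rho_2$ is used only at the last step to pick one decisive vertex. Suppose for contradiction $u_1 \neq u_2$. After swapping the two indices if necessary, the set $A = \{v \in V' : u_2(v) > u_1(v)\}$ is nonempty, and I will use the vertex $v^* \in A$ with $\target(\rho_2(v^*)) \notin A$ provided by acyclicity of~$\rho_2$ to derive a contradiction.

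First I would record two bookkeeping facts about the edge-firing counts $n_i(w,e)$, defined as the number of times edge $e \in E_w$ is used during $F^{u_i}(\rho,\sigma)$ (with $n_i(w,\cdot) = 0$ if $w \in S$). Since the rotor at~$w$ advances cyclically, $n_i(w,e)$ is non-decreasing in $u_i(w)$, and when $u_i(w) \geq 1$ the last ($u_i(w)$-th) firing of~$w$ uses precisely the final rotor $\rho_i(w)$. Chip conservation at $v \in V'$ then reads
\[
	\sigma_i(v) - \sigma(v) \;=\; -u_i(v) + \sum_{\target(e) = v} n_i(\source(e), e),
\]
so writing $f(w,e) = n_2(w,e) - n_1(w,e)$ and using $\sigma_1 = \sigma_2$ on~$V'$ I obtain the master identity
\[
	u_2(v) - u_1(v) \;=\; \sum_{\target(e) = v} f(\source(e), e), \qquad v \in V'.
\]
Monotonicity translates into sign information: $f(w,\cdot) \geq 0$ when $w \in A$ and $\leq 0$ when $w \notin A$, while $\sum_{e \in E_w} f(w,e) = u_2(w) - u_1(w)$.

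Next I would sum the master identity over $v \in A$ and split the inner sum according to whether $\source(e) \in A$. Using the row-sum identity to rewrite the $(w \in A)$ contributions, the $A$-sums cancel on both sides, leaving
\[
	\sum_{\substack{w \in A,\, e \in E_w\\ \target(e) \notin A}} f(w, e)
	\;=\;
	\sum_{\substack{w \notin A,\, e \in E_w\\ \target(e) \in A}} f(w, e).
\]
The left side is $\geq 0$ and the right side $\leq 0$ by the sign information, so both vanish; in particular $f(w, e) = 0$ for every $w \in A$ and every $e \in E_w$ with $\target(e) \notin A$.

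Finally I invoke acyclicity. Since $u_2(v^*) > u_1(v^*) \geq 0$, the $u_2(v^*)$-th firing of~$v^*$ occurs in process~$2$ but not in process~$1$, and it uses the edge $\rho_2(v^*)$ by the last-firing fact; hence $f(v^*, \rho_2(v^*)) \geq 1$, contradicting the display above. Thus $A$ is empty, and the same argument with the indices swapped (using acyclicity of~$\rho_1$) shows $\{v : u_1(v) > u_2(v)\}$ is empty too, so $u_1 = u_2$. I expect the main obstacle to be the inclusion-exclusion bookkeeping in the summation step; once the edge sums are partitioned by source and target the sign cancellation and the acyclicity application become one-line observations.
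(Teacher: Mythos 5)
Your proof is correct. The decisive use of acyclicity is the same as in the paper's Lemma~\ref{lem:mustcycle}: pick $v^*\in A$ whose final rotor exits~$A$, and observe that its last firing is an ``extra'' chip leaving~$A$ that cannot be matched by any extra chip entering~$A$. What differs is the route to that situation. The paper factors the argument through the intermediate configuration $F^{\min(u_1,u_2)}(\rho,\sigma)$ and a separate comparison lemma (Lemma~\ref{lem:fireless}): monotonicity of chip counts under one-sided firing forces $\hat\sigma=\sigma_1$, and Lemma~\ref{lem:mustcycle} is then applied to the leftover odometer $u_1-\min(u_1,u_2)$. You instead work directly with the edge-traversal counts $n_i(w,e)$; their monotonicity in $u_i(w)$ (which holds because the sequence of edges used at~$w$ depends only on $\rho(w)$ and the number of firings, not on the interleaving) plays the role of the paper's monotonicity of~$\hat\sigma$, and your summation identity over~$A$ yields the edge-by-edge conclusion $f(w,e)=0$ on all edges from~$A$ to~$A^c$, slightly sharper than what the paper extracts. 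The trade-offs: your argument is self-contained and avoids the intermediate configuration, but the cancellation step uses $\sigma_1=\sigma_2$ as a genuine equality, so it does not recover the paper's one-sided Lemma~\ref{lem:fireless} ($\rho_1$ acyclic and $\sigma_2\le\sigma_1$ on~$V'$ imply $u_1\le u_2$), a comparison principle of independent interest. For the theorem as stated, all of your bookkeeping (the last-firing identity, the sign analysis on~$A$ and~$A^c$, and the cancellation of the $A$-to-$A$ terms) checks out.
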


Note that the equality $u_1 = u_2$ implies that $\rho_1 = \rho_2$, and that 
$\sigma_1 = \sigma_2$ on all of~$V$. 
For a similar idea with an algorithmic application, 
see~\cite[Theorem~1]{FL10}.

In a typical application of Theorem~\ref{thm:SAP}, we take $\sigma_1 = 
\sigma_2 = 0$ on~$V'$, and $u_1$ to be the usual rotor-router odometer 
function
\[ u_1(v) = \#\{1 \leq j \leq k \,:\, v_j = v\} \]
where $v_1, v_2, \dotsc, v_k$ is a \emph{complete legal firing sequence} for 
the initial configuration~$(\rho,\sigma)$; that is, a sequence of vertices 
which, when fired in order, causes all chips to be routed to the sinks without 
ever creating any holes. Provided $u_1(v) > 0$ for all $v\in V'$, the 
resulting rotor configuration~$\rho_1$ is acyclic: indeed, for any nonempty 
subset $A \subset V'$, the rotor at the last vertex of~$A$ to fire points to a 
vertex not in~$A$.  

The usual abelian property of rotor-router walk~\cite[Theorem~4.1]{DF91} says 
that any two complete legal firing sequences have the same odometer function.  
The Strong Abelian Property allows us to drop the hypothesis of legality: any 
two complete firing sequences whose final rotor configurations are acyclic 
have the same odometer function, \emph{even if one or both of these firing 
sequences temporarily creates holes}.

In our application to rotor-router aggregation on the layered square lattice, 
we take $V=D_n$ and $S=L_n$.  We will take $\sigma$ to be the chip 
configuration consisting of $2n(n+1)+1$ chips at the origin, and $\rho$ to be 
the initial rotor configuration~$\rho_0$.  Letting the chips at the origin in 
turn perform rotor-router walk until finding an unoccupied site defines a 
legal firing sequence (although not a complete one, since not all chips reach the sinks).  In the next section, we give an explicit formula for the corresponding odometer function, and use Theorem~\ref{thm:SAP} to prove its correctness.  The proof of Theorem~\ref{thm:diamond} is completed by showing that each nonzero vertex in $D_n$ receives exactly one more chip from its neighbors than the number of times it fires. 

To prove Theorem~\ref{thm:SAP} we start with the following lemma.

\begin{lemma}
	\label{lem:mustcycle}
	Let $u: V'\to \N$, and write
	\[ F^u(\rho,\sigma) = (\rho_1,\sigma_1). \]
	If $\sigma = \sigma_1$, and $\rho_1$ is acyclic, then $u=0$.
\end{lemma}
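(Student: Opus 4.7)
The plan is to argue by contradiction, combining acyclicity of $\rho_1$ with a chip-conservation bookkeeping on the set of vertices that actually fire. Suppose $u \neq 0$, and set $A = \{v \in V' : u(v) > 0\}$, which is a nonempty subset of $V'$. Applying the equivalent formulation of acyclicity stated just before the lemma, there is some $v^* \in A$ with $\target(\rho_1(v^*)) \notin A$.

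Next I would track the net flow of chips into and out of $A$ during the execution of $F^u$. No vertex outside $A$ ever fires: sinks in $S$ are excluded from the domain of $u$, and vertices in $V'\setminus A$ satisfy $u=0$ by definition of $A$. Consequently, no chip ever enters $A$ from outside during $F^u$, while chips leave $A$ exactly when some $w \in A$ is fired and its newly rotated rotor targets a vertex outside $A$. Summing the net change in $\sigma$ over $A$ therefore yields
\[ \sum_{v\in A} \bigl( \sigma_1(v) - \sigma(v) \bigr) \;=\; -\,(\text{number of chips leaving $A$ during $F^u$}). \]
Since $\sigma=\sigma_1$ by hypothesis, the left side vanishes, forcing no chip to leave $A$.

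The contradiction comes from the last firing of $v^*$. The rotor at $v^*$ is only changed by firings of $v^*$ itself, so $\rho_1(v^*)$ is precisely the rotor at $v^*$ immediately after its final firing; and that firing sent a chip to $\target(\rho_1(v^*)) \notin A$. This is a chip leaving $A$, contradicting the previous paragraph, so we must have $u=0$.

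I expect the main conceptual step to be isolating the chip-conservation identity above: once one sees that $A$ is a ``closed system'' because nothing outside $A$ fires, the acyclicity hypothesis is tailor-made to produce an outgoing chip via the final firing of $v^*$, which breaks conservation. Everything else is routine bookkeeping, and the argument never actually requires the intermediate configurations to be nonnegative, which is exactly why this lemma (and hence Theorem~\ref{thm:SAP}) covers firing sequences that may temporarily create holes.
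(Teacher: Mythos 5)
Your proposal is correct and is essentially the paper's own argument: both take $A=\{v: u(v)>0\}$, use acyclicity of $\rho_1$ to find a vertex of $A$ whose final rotor points out of $A$, note that no chips enter $A$ since nothing outside $A$ fires, and derive a contradiction with $\sigma=\sigma_1$ from the chip that exits $A$ on that vertex's last firing. The only difference is cosmetic (you phrase the contradiction via a conservation identity rather than a strict inequality), so no further comment is needed.
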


\begin{proof}
	Let $A = \{ v\in V': u(v)>0 \}$, and suppose that~$A$ is nonempty.  Since 
	$\rho_1$ is acyclic, there is a vertex $v\in A$ whose rotor $\rho_1(v)$ 
	points to a vertex not in~$A$. The final time $v$ is fired, it sends a 
	chip along this rotor; thus, at least one chip exits~$A$. Since the 
	vertices not in~$A$ do not fire, no chips enter~$A$, hence
	\[ \sum_{v\in A} \sigma_1(v) < \sum_{v\in A} \sigma(v) \]
	contradicting $\sigma = \sigma_1$. Therefore, $A$ is empty.
\end{proof}

Theorem~\ref{thm:SAP} follows immediately from the next lemma.

\begin{lemma}
	\label{lem:fireless}
	Let $u_1,u_2: V'\to \N$, and write
	\[ F^{u_i}(\rho,\sigma) = (\rho_i,\sigma_i), \qquad i=1,2. \]
	If $\rho_1$ is acyclic and $\sigma_2 \leq \sigma_1$ on~$V'$, then $u_1 
	\leq u_2$ on~$V'$.
\end{lemma}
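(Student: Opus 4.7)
I plan to prove this by contradiction. Suppose that $A := \{v \in V' : u_1(v) > u_2(v)\}$ is nonempty. Since $\rho_1$ is acyclic, there exists $v^* \in A$ with $\target(\rho_1(v^*)) \notin A$. The idea is to derive a contradiction with $\sigma_2 \leq \sigma_1$ on~$V'$ by comparing the chip flows across the boundary of~$A$ in the two firing sequences.

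For each $w \in V'$, $i \in \{1,2\}$ and $y \in V$, let $f_i(w,y)$ denote the number of firings of $w$ during $F^{u_i}$ whose new rotor points to~$y$. Because both processes start from the same initial rotor $\rho(w)$ and each firing advances it under $e \mapsto e^+$, the sequence of edges used by~$w$ depends only on $\rho(w)$ and is simply truncated at length $u_i(w)$; in particular $f_i(w,y)$ is independent of firing order. The monotonicity principle follows at once: if $u_1(w) \leq u_2(w)$, then $f_1(w,y) \leq f_2(w,y)$ for every~$y$, with the inequality reversed when $u_1(w) \geq u_2(w)$.

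Chip conservation gives $\sigma_i(v) = \sigma(v) - u_i(v) + \sum_w f_i(w,v)$. Summing over $v \in A$ and cancelling the $A$-to-$A$ traffic yields
\[ \sum_{v \in A}\bigl(\sigma_i(v) - \sigma(v)\bigr) \;=\; \psi_i - \phi_i, \]
where $\psi_i := \sum_{w \notin A,\, v \in A} f_i(w,v)$ and $\phi_i := \sum_{w \in A,\, y \notin A} f_i(w,y)$ count, respectively, the chips entering and leaving~$A$ in process~$i$. Applying the monotonicity vertex by vertex gives $\psi_1 \leq \psi_2$ (over $w \notin A$, where $u_1(w) \leq u_2(w)$) and $\phi_1 \geq \phi_2$ (over $w \in A$, where $u_1(w) > u_2(w)$). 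The crucial strict inequality comes from~$v^*$: its $u_1(v^*)$-th firing in process~1 has new rotor $\rho_1(v^*)$, which by the choice of $v^*$ points outside~$A$, whereas this firing never occurs in process~2 (since $u_2(v^*) < u_1(v^*)$); therefore $\phi_1 \geq \phi_2 + 1$. Combining, $\sum_{v \in A}(\sigma_1 - \sigma_2) \leq -1$, contradicting $\sigma_1 \geq \sigma_2$ on $V' \supseteq A$. Hence $A = \emptyset$, i.e.\ $u_1 \leq u_2$ on~$V'$.

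The main obstacle is converting the weak bounds on $\psi$ and $\phi$ into a strict deficit; this is precisely what acyclicity of~$\rho_1$ buys us, by producing an exit vertex~$v^*$ whose final process-1 firing ships a chip out of~$A$ that no firing in process~2 compensates. Everything else --- the monotonicity of edge usage as a function of the firing count, and the flow balance across $\partial A$ --- is routine once $f_i(w,y)$ is set up correctly.
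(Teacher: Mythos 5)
Your proof is correct. It takes a genuinely different route from the paper's, although both arguments ultimately extract the same strict inequality from acyclicity: an exit vertex of $A=\{v: u_1(v)>u_2(v)\}$ whose final process-1 firing sends a chip out of $A$ with nothing in process~2 to compensate. The paper instead passes through the intermediate configuration $(\hat\rho,\hat\sigma)=F^{\min(u_1,u_2)}(\rho,\sigma)$: from there $(\rho_1,\sigma_1)$ is reached by firing only vertices of $A$ and $(\rho_2,\sigma_2)$ by firing only vertices of $A^c$, which yields $\hat\sigma\leq\sigma_1$ on $A^c$ directly and $\hat\sigma\leq\sigma_2\leq\sigma_1$ on $A$; conservation of the total chip count then forces $\hat\sigma=\sigma_1$, and a separate lemma (Lemma~\ref{lem:mustcycle}) applied to $u_1-\min(u_1,u_2)$ finishes. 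The advantage of the paper's factorization is that it never needs your monotonicity principle for the edge-usage counts $f_i(w,y)$ --- only the trivial fact that firing a vertex cannot decrease the chip count elsewhere. Your direct flux-balance computation across $\partial A$ needs that monotonicity (which you correctly justify: the sequence of edges used by $w$ depends only on $\rho(w)$ and the number of firings, so the count is a truncation), but in exchange it is self-contained, folding the content of Lemma~\ref{lem:mustcycle} into a single quantitative inequality $\sum_{v\in A}(\sigma_1-\sigma_2)\leq\psi_1-\psi_2-(\phi_1-\phi_2)\leq -1$. Both proofs are complete and correct.
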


\begin{proof}
	Let
	\[ (\hat\rho,\hat\sigma) = F^{\min(u_1,u_2)}(\rho,\sigma). \]
	Then $(\rho_1,\sigma_1)$ is obtained from $(\hat\rho,\hat\sigma)$ by 
	firing only vertices in the set $A = \{ v\in V': u_1(v) > u_2(v) \}$, so
	\[ \hat\sigma \leq \sigma_1 \text{ on $A^c$}. \]
	Likewise, $(\rho_2,\sigma_2)$ is obtained from $(\hat\rho,\hat\sigma)$ by 
	firing only vertices in~$A^c$, so
	\[ \hat\sigma \leq \sigma_2 \leq \sigma_1 \text{ on $A$}. \]
	Thus $\hat\sigma \leq \sigma_1$ on~$V$.  Since $\sum_{v \in V} 
	\hat\sigma(v) = \sum_{v \in V} \sigma_1(v)$ it follows that
	$\hat\sigma = \sigma_1$. Taking
	\[ u = u_1 - \min(u_1,u_2) \]
	in Lemma~\ref{lem:mustcycle}, since $F^u(\hat\rho, \hat\sigma) = (\rho_1, 
	\sigma_1)$ we conclude that $u=0$.
\end{proof}

\section{Proof of Theorem~\ref{thm:diamond}}
\label{sec:shapetheorem}

Consider again the rotor-router model on the layered square 
lattice~$\hat\Z^2$.  We will work with the induced subgraph~$D_n$ 
of~$\hat\Z^2$, taking the sites in the outermost layer~$L_n$ as sinks.

Recall our notation
\[ Q = \left\{ (x,y)\in \Z^2 \,:\, x\geq0, y>0 \right\} \]
for the first quadrant of~$\Z^2$.  We have $\Z^2 = \{o\} \cup \bigl( 
\bigcup_{i=0}^3 R^iQ \bigr)$, where $R = \left( \begin{smallmatrix} 0 & -1 \\
1 & 0 \end{smallmatrix} \right)$ is clockwise rotation by $90$ degrees.  
Fix~$n$, and for $z = (x,y) \in D_n$ write
\[ \ell_z = n-|x|-|y|. \]
Consider the sets
\[
	C_2 = \left\{ (x,y)\in Q \cap D_{n-1} \,:\, x>0, \,  y \geq 2, \, \ell_{(x,y)} \equiv 2 
	\bmod 4 \right\}
\]
\[
	C_3 = \left\{ (x,y)\in Q \cap D_{n-1} \,:\, x>0, \,  y \geq 1, \, \ell_{(x,y)} \equiv 3 
	\bmod 4 \right\}
\]
and
\[ C = \bigcup_{i=0}^3 R^i (C_2 \cup C_3). \]

Define $u_n : D_{n-1} \to \N$ by 
\begin{equation}
	\label{eq:u_n}
	 u_n = u'_n - 1_C
\end{equation}
where
\begin{equation}
	\label{eq:u'_n}
	u'_n(z) = \begin{cases}
		2n(n+1)			&	\text{if $z=o$};\\
		\ell_z (\ell_z+1)		&	\text{if $z\neq o$};
	\end{cases}
\end{equation}
and $1_C(z)$ is the indicator function which is $1$ for $z \in C$ and $0$ 
for $z \notin C$.

Let~$\rho_0$ be the initial rotor configuration~\eqref{eq:rho_0}, and define 
the rotor configuration~$\rho_n$ on~$D_{n-1}$ and chip 
configuration~$\sigma_n$ on~$D_n$ by setting
\[  F^{u_n}(\rho_0,(2n^2+2n+1)\delta_o) = (\rho_n,\sigma_n).  \]
From the formula~\eqref{eq:u_n} it is easy to obtain an explicit description 
of~$\rho_n$, and to verify that these rotor configurations are acyclic for all 
$n\geq1$. Figure~\ref{fig:RotorConfig} depicts the rotor configurations 
$\rho_2, \rho_3, \dotsc, \rho_7$ in the first quadrant. 

\begin{figure}
	\begin{center}
		\includegraphics{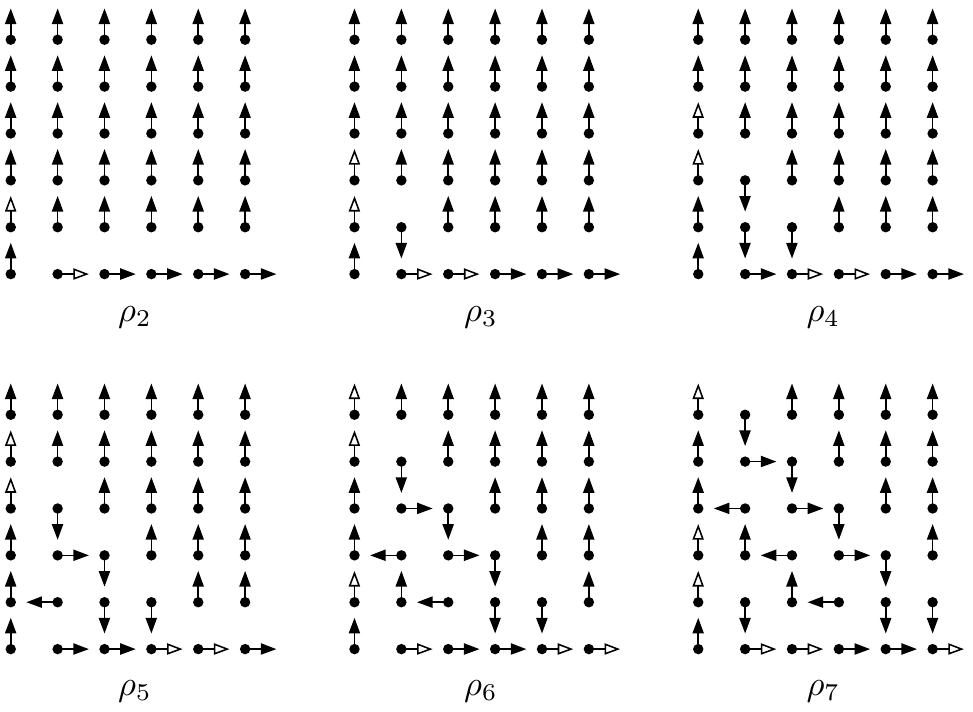}
	\end{center}
	\caption{The rotor configurations $\rho_2, \rho_3, \dotsc, \rho_7$ in the 
	first quadrant. The lower left corner is the origin in each picture. On 
	the axes, the black arrows correspond to the directed edge~$e_z^0$ 
	in~\eqref{eq:order}, and open-headed arrows to~$e_z^2$.}
	\label{fig:RotorConfig}
\end{figure}

\begin{lemma}
	\label{lem:diamond}
	For all $n\geq1$, we have $\sigma_n = 1_{D_n}$.
\end{lemma}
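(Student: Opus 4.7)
The plan is to verify $\sigma_n = 1_{D_n}$ by a direct chip-balance computation at each vertex. If $f(z)$ denotes the total number of chips routed into $z$ by firings of its in-neighbors under the odometer $u_n$, then for $z\in D_{n-1}$ the firing balance gives
\[
\sigma_n(z) = \sigma_0(z) - u_n(z) + f(z),
\]
while for $z\in L_n$ (which is never fired) $\sigma_n(z) = f(z)$. The lemma therefore reduces to three scalar identities: $f(o)=0$, $f(z) = u_n(z)+1$ for $z\in D_{n-1}\setminus\{o\}$, and $f(z) = 1$ for $z\in L_n$.

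To compute $f(z)$, I would first describe the out-flow from each vertex $v$. Writing $u_n(v)=4q+r$ with $0\leq r<4$, firing $v$ exactly $u_n(v)$ times starting from $\rho_0(v)=e_v^0$ sends $q$ chips along $e_v^0$ and $q+1$ chips along each of $e_v^1,\dotsc,e_v^r$. Translating via~\eqref{eq:order} into compass directions, and using the four-fold rotational symmetry of $\rho_0$ and $u_n$ to restrict to the closed first quadrant, I would accumulate the contributions at each $z$ from its in-neighbors. Because $\ell(\ell+1)\bmod 4$ takes only the values $0$ and $2$, the pair $(q,r)$ is a simple function of $\ell_z\bmod 4$ and $1_C(z)$, so the case analysis splits by position of $z$ in the closed first quadrant (strict interior with $x,y>0$, positive $y$-axis, positive $x$-axis, origin, and outer layer $L_n$) together with $\ell_z\bmod 4$. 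The sets $C_2$ and $C_3$ are designed so that in each case the terminal rotor at $v$ lies in exactly the position needed for the last few chips emitted by $v$ to reach the right neighbors and balance $f(z)=u_n(z)+1$.

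The main obstacle will be the axial case. At $v=(0,y)$ one has $e_v^0=e_v^2$, both pointing north, so a full cycle of four firings sends two chips north, one east, and one west; this asymmetric distribution must mesh with the flows from the off-axis neighbors $(\pm 1,y)$ and with the axial neighbor $(0,y+1)$, whose own firings share the same peculiarity. The identity $f(o)=0$ is especially delicate, since each of the four axial neighbors $(\pm 1,0),(0,\pm 1)$ sends chips away from the origin along its doubled axial edge, and one must check that none of its other outgoing firings happen to return to $o$. Once the axial bookkeeping is handled, the remaining strict-interior vertices and the sink identity $f(z)=1$ on $L_n$ reduce to a short arithmetic check against the explicit formula $u_n = \ell_z(\ell_z+1)-1_C$.
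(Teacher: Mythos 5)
Your plan is sound and is, at heart, the same argument as the paper's: reduce the lemma to the local chip-balance identities $f(o)=0$, $f(z)=u_n(z)+1$ for $z\in D_{n-1}\setminus\{o\}$, and $f(z)=1$ on the sinks, and then verify them from the explicit odometer. Your out-flow formula (writing $u_n(v)=4q+r$, the edge $e_v^0$ carries $q$ chips and $e_v^1,\dotsc,e_v^r$ carry $q+1$ each, the rest $q$) is correct, as is the observation that $(q,r)$ depends only on $\ell_v\bmod 4$ and $1_C(v)$. Where you differ is in the bookkeeping: the paper first runs the computation for the smooth odometer $u'_n(z)=\ell_z(\ell_z+1)$, using two devices that collapse most of the mod-$4$ case analysis --- firing the neighbors of $z$ \emph{in parallel}, so that each round (or each pair of rounds) delivers exactly one chip to $z$, and temporarily \emph{unfiring} the positive $x$-axis once to symmetrize the initial rotors --- and only afterwards accounts for the correction $1_C$ by checking that the one omitted firing at each $v\in C$ withholds a chip from $z$ exactly when $z\in C$ itself. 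Your single-pass analysis on $\ell_z\bmod 4$ and $1_C$ would also work, but it is heavier: you must track in-flow from up to four neighbors lying in different rotated quadrants, whose edge orderings are permuted by $R^j$, and none of that arithmetic is actually carried out in the proposal, so the case analysis remains to be done. One small correction: $f(o)=0$ is not delicate --- the origin has in-degree zero in $\hat\Z^2$, since every axis edge pointing toward $o$ has been reflected outward, so no firing of any neighbor can ever return a chip to $o$.
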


\begin{proof}
	The origin~$o$ has no incoming edges in $\hat \Z^2$, so it receives no 
	chips from its neighbors.  Since $u_n(o)=2n^2+2n$, the origin is left with 
	exactly one chip after firing. 	The sink vertices~$L_n$ do not fire and 
	only receive chips.  Since $u_n(z) = 2$ for all $z\in L_{n-1}$, it follows 
	from~\eqref{eq:order} and~\eqref{eq:rho_0} that exactly one chip is sent 
	to each sink vertex.

	It remains to show that $\sigma_n(z) = 1$ for each vertex $z\in D_{n-1} 
	\setminus \{o\}$, i.e., that the number of chips sent to~$z$ by its 
	neighbors is one more than the number of times~$z$ is fired (that is, 
	$1+u_n(z)$). To show this, write
	\[  F^{u'_n}(\rho_0,(2n^2+2n+1)\delta_o) = (\rho'_n,\sigma'_n)	\]
	where $u'_n$ is given by~(\ref{eq:u'_n}).
	We will argue that $\sigma'_n(z) = 1$ and that $\sigma_n(z) = 
	\sigma'_n(z)$.  By symmetry, it suffices to consider points $z = (x,y)$ in 
	$D_{n-1}\cap Q$.  We argue separately in the two cases $x=0$ and $x>0$ (on 
	the axis and off the axis).

	\textbf{Case 1:} $x=0$.
	Under $F^{u'_n}$, the site $z$ fires $\ell_z (\ell_z+1)$ times. If $y>1$, its 
	neighbor $z-\basis_2$ fires $(\ell_z+1) (\ell_z+2)$ times, and from 
	\eqref{eq:order} and~\eqref{eq:rho_0} we see that it sends a chip to~$z$ 
	every even time it is fired. Since $(\ell_z+1) (\ell_z+2)$ is even, it 
	follows that $z-\basis_2$ sends $\tfrac12 (\ell_z+1) (\ell_z+2)$ chips 
	to~$z$. The same is true if $y=1$, since then $\ell_z = n-1$, and the 
	origin $o=z-\basis_2$ sends $\tfrac12 n(n+1)$ chips to~$z$.
	
	The only other vertices that send chips to~$z$ under $F^{u'_n}$ are its 
	left and right neighbors $z\pm \basis_1$. Since $\ell_{z\pm\basis_1} = 
	\ell_z-1$, these neighbors fire $\ell_z (\ell_z-1)$ times. We claim that 
	together they send $\tfrac12 \ell_z (\ell_z-1)$ chips to~$z$. To see this,  
	note that if we fire these two vertices in parallel, they send one chip 
	to~$z$ every two times we fire. We therefore conclude that
	\[
		\sigma'_n(z) = 	\tfrac12 (\ell_z+1) (\ell_z+2) + \tfrac12 \ell_z 
		(\ell_z-1)  - \ell_z (\ell_z+1) = 1.
	\]

	To show that $\sigma_n(z) = \sigma'_n(z)$, note first that neither $z$ 
	nor~$z-\basis_2$ is in~$C$ because $x=0$. The right neighbor $z+\basis_1$ 
	might be in~$C$, but since $\ell_z (\ell_z-1)$ is even, the last chip sent 
	from $z+\basis_1$ by~$F^{u'_n}$ does not move to~$z$. The left neighbor 
	$z-\basis_1$ is in~$C$ only if $\ell_{z-\basis_1} = \ell_z-1 \equiv 3\bmod 
	4$, which implies $\ell_z (\ell_z-1) \equiv 0\bmod 4$. Hence if 
	$z-\basis_1$ is in~$C$, the last chip sent from $z-\basis_1$ by~$F^{u'_n}$ 
	moves west. It follows that $F^{u_n}$ and~$F^{u'_n}$ fire~$z$ the same 
	number of times and send the same number of chips to~$z$, hence 
	$\sigma_n(z) = \sigma'_n(z) = 1$.

	\textbf{Case 2:} $x>0$.
	To argue that $\sigma'_n(z) = 1$, as an initial step we \emph{un}fire 
	every vertex on the positive $x$-axis $B = \{ (m,0) \in\Z^2: m>0 \}$ once. 
	Since all initial rotors on~$B$ point east, this turns all these rotors 
	north without affecting the number of chips at~$z$ (nor at any other 
	vertex of~$Q$).

	Now we apply~$F^{u'_n}$. By firing the four neighbors of~$z$ in parallel, 
	it is easy to see from~\eqref{eq:order} that they send one chip to~$z$ 
	every firing round, since after every round exactly one of their rotors 
	points to~$z$. Hence, firing these neighbors $\ell_z (\ell_z-1)$ times 
	each sends $\ell_z (\ell_z-1)$ chips to~$z$. Since $\ell_{z-\basis_1} = 
	\ell_{z-\basis_2} = \ell_z+1$, the two neighbors $z-\basis_1$ and 
	$z-\basis_2$ each fire
	\[ (\ell_z+1)(\ell_z+2) - \ell_z(\ell_z-1) = 4\ell_z+2 \]
	additional times under~$F^{u'_n}$.  Considering what happens when they are 
	fired in parallel shows that they send one chip to~$z$ every two times 
	they fire, meaning that $2\ell_z+1$ additional chips are sent to~$z$.
	
	Finally, to obtain~$\sigma'_n$ we must fire every vertex in~$B$ once more. 
	But since $F^{u'_n}$ fires each vertex in~$B$ an even number of times, 
	their rotors are now pointing either north or south, so firing them once 
	more does not affect the number of chips at~$z$. Hence
	\[	\sigma'_n(z) = \ell_z(\ell_z-1) + (2\ell_z+1) - \ell_z(\ell_z+1) = 1. 
	\]

	To finish the proof, we now argue that $\sigma_n(z) = \sigma'_n(z)$. First 
	note that since $\ell_v (\ell_v+1)$ is even for all $v\in D_{n-1}$, it 
	follows from~\eqref{eq:order} that the last chips sent from~$z\pm 
	\basis_1$ by~$F^{u'_n}$ do not move to~$z$. However, consider the neighbor 
	$z+\basis_2$. If $\ell_{z+\basis_2} = \ell_z-1 \equiv 3\bmod 4$, its final 
	rotor points north after firing $\ell_z (\ell_z-1)$ times, while if 
	$\ell_z-1 \equiv 2\bmod 4$, its final rotor points south. It therefore 
	follows from the definition of~$C$, that $F^{u_n}$ sends one fewer chip 
	from $z+\basis_2$ to~$z$ than $F^{u'_n}$ in case $\ell_z \equiv 3\bmod 4$ 
	and $y+1 \geq 2$.  Likewise, $F^{u_n}$ sends one fewer chip from 
	$z-\basis_2$ to~$z$ than $F^{u'_n}$ in case $\ell_z \equiv 2\bmod 4$ and 
	$y-1 \geq 1$.  But these are precisely the two cases when $z\in C$, hence 
	$F^{u_n}$ also fires~$z$ once fewer than $F^{u'_n}$. Therefore, 
	$\sigma_n(z) = \sigma'_n(z) = 1$.
\end{proof}

We remark that the rotor configuration $\rho_n$ is obtained from $\rho'_n$ by \emph{cycle-popping}: that is, for each directed cycle of rotors in $\rho'_n$, unfire each vertex in the cycle once.  Popping a cycle causes each vertex in the cycle to send one chip to the previous vertex, so there is no net movement of chips.  Let $\rho''_n$ be the acyclic rotor configuration obtained from cycle-popping, and let
	\[ u''_n = u'_n - c_n \]
where $c_n(z)$ is the number of times~$z$ is unfired during cycle-popping.  Then
	\[ F^{u''_n}(\rho_0, (2n^2+2n+1)\delta_o) = (\rho''_n, 1_{D_n}). \]
By Lemma~\ref{lem:diamond}, we have
	\[ F^{u_n}(\rho_0, (2n^2+2n+1)\delta_o) = (\rho_n, 1_{D_n}). \]
By the Strong Abelian Property (Theorem~\ref{thm:SAP}), it follows that $u''_n=u_n$.  In particular, $c_n = 1_C$ and $\rho''_n = \rho_n$. 

\begin{proof}[Proof of Theorem~\ref{thm:diamond}]
	For $m\geq0$, let $r_m$ be the smallest integer such that $2r_m(r_m+1) > 
	m$. Consider a modified rotor-router aggregation defined by the growth 
	rule
	\[ \tilde A_{m+1} = \tilde A_m \cup \{\tilde z_m\} \]
	where $\tilde z_m$ is the endpoint of a rotor-router walk in $\hat \Z^2$ and 
	stopped on exiting $\tilde A_m \cap D_{r_m-1}$. Define $\tilde u_n: D_{n-1} \to \N$ by setting
	\[
		\tilde u_n(z) = \# \text{ of times $z$ fires during the formation of 
		$\tilde A_{2n(n+1)}$}.
	\]
	We will induct on~$n$ to show that $u_n = \tilde u_n$ for all $n\geq1$. Since 
	$u_n = \tilde u_n$ implies $A_{2n(n+1)} = \tilde A_{2n(n+1)} = D_n$ by 
	Lemma~\ref{lem:diamond}, this proves the theorem.

	The base case of the induction is immediate: $u_1 = \tilde u_1 = 4\delta_o$. For 
	$n\geq 2$, in the induced subgraph~$D_n$ of~$\hat \Z^2$ with sink 
	vertices~$L_n$ we have
	\[	F^{u_n}(\rho_0,(2n^2+2n+1)\delta_o) = (\rho_n,1_{D_n}) \]
	by Lemma~\ref{lem:diamond}. On the other hand,
	\[	F^{\tilde u_n}(\rho_0,(2n^2+2n+1)\delta_o) = (\tilde \rho_n, \tilde \sigma_n)	\]
	for some rotor configuration $\tilde \rho_n$ on~$D_{n-1}$ and chip 
	configuration~$\tilde \sigma_n$ on~$D_n$.  By the inductive hypothesis, $\tilde A_{2n(n-1)} = D_{n-1}$, from which it follows that in the formation of $\tilde A_{2n(n+1)}$ 
	from~$\tilde A_{2n(n-1)}$, all rotor-router walks are stopped on 
	exiting~$D_{n-1}$.  Together these facts imply that $\tilde \sigma_n = 1$ on~$D_{n-1}$.  Moreover, since $\rho_0$ is acyclic, $\tilde \rho_n$ is acyclic (each rotor points in the direction a chip last exited). 
	The Strong Abelian Property (Theorem~\ref{thm:SAP}) now gives $u_n = \tilde u_n$, which completes the inductive step.
\end{proof}


\begin{thebibliography}{99}

	\bibitem[CS06]{CS06} J.~Cooper and J.~Spencer.
		\newblock Simulating a random walk with constant error.
		\newblock \textit{Combin.\ Probab.\ Comput.} \textbf{15}: 815--822 
		(2006).

	\bibitem[DF91]{DF91} P.~Diaconis and W.~Fulton.
		\newblock A growth model, a game, an algebra, {L}agrange inversion, 			
		and characteristic classes.
		\newblock \textit{Rend.\ Sem.\ Mat.\ Univ.\ Politec.\ Torino} 			
		\textbf{49}(1): 95--119 (1991).

	\bibitem[DFS08]{DFS08} B.~Doerr, T.~Friedrich and T.~Sauerwald.
		\newblock Quasirandom rumor spreading.
		\newblock \textit{19th Annual ACM-SIAM Symposium on Discrete 
		Algorithms (SODA 2008)}, 773--781.
		
	\bibitem[FGS10]{FGS10} T. Friedrich, M. Gairing and T. Sauerwald.
		\newblock Quasirandom load balancing.
		\newblock \textit{21st Annual ACM-SIAM Symposium on Discrete 
		Algorithms (SODA 2010)}, 1620--1629.
			
	\bibitem[FL10]{FL10} T. Friedrich and L. Levine.
		\newblock Fast simulation of large-scale growth models.
		\newblock (Manuscript in preparation).
			
	\bibitem[HP09]{HP09} A. E. Holroyd and J. Propp.
		\newblock Rotor walks and Markov chains.
		\newblock \arxiv{0904.4507}.

	\bibitem[KL10]{KL10} W. Kager and L. Levine.
		\newblock Diamond aggregation.
		\newblock \textit{Math.\ Proc.\ Cambridge Phil.\ Soc.} (to appear).
	
	\bibitem[LBG92]{LBG92} G.\,F.~Lawler, M.~Bramson and D.~Griffeath.
		\newblock Internal diffusion limited aggregation.
		\newblock \textit{Ann.\ Probab.} \textbf{20}(4): 2117--2140 (1992).

	\bibitem[LL09]{LL09} I. Landau and L. Levine.
		\newblock The rotor-router model on regular trees.
		\newblock \textit{J. Combin.\ Theory.~A} (2009).
		\newblock \arxiv{0705.1562}.

	\bibitem[LP08]{LP08} L. Levine and Y. Peres.
		\newblock Spherical asymptotics for the rotor-router model in $\Z^d$.
		\newblock \textit{Indiana Univ.\ Math.\ J.} \textbf{57}(1): 431--450 
		(2008).
	
	\bibitem[LP09]{LP09} L. Levine and Y. Peres.
		\newblock Strong spherical asymptotics for rotor-router aggregation 
		and the divisible sandpile.
		\newblock \textit{Potential Anal.} \textbf{30}: 1--27 (2009).
		
	\bibitem[PDDK96]{PDDK96} V. B. Priezzhev, D. Dhar, A. Dhar and S. 
		Krishnamurthy.
		\newblock Eulerian walkers as a model of self-organized criticality.
		\newblock \textit{Phys.\ Rev.\ Lett.} \textbf{77}: 5079--5082 (1996).	
\end{thebibliography}
\end{document}